\documentclass[12pt,reqno]{amsart}
\usepackage{amsthm,amssymb,mathabx}
\usepackage{bm,fullpage}

\usepackage[
  bookmarks = true, 
  pagebackref = false, 
  pdfauthor = {Anderson Palsson}, 
  pdftitle = \text{AP}, 
  colorlinks = true, 
  linkcolor = blue, 
%   linkbordercolor = {1 0 1},
%   pdfborderstyle={/S/U/W 1}
  citecolor = blue]
{hyperref}

\newtheorem{thm}{Theorem}

\newtheorem{lemma}{Lemma}
\newtheorem{prop}{Proposition}

\newtheorem{cor}{Corollary}

\theoremstyle{remark}

\newtheorem{remark}{Remark}

\newcommand{\R}{\ensuremath{\mathbb{R}}}
\newcommand{\T}{\ensuremath{\mathbb{T}}}
\newcommand{\Z}{\ensuremath{\mathbb{Z}}}

\newcommand{\N}{\ensuremath{\mathbb{N}}}

\newcommand{\eq}{\begin{equation}}
\newcommand{\ee}{\end{equation}}

\newcommand{\RFT}[1]{\widetilde{#1}}

\numberwithin{equation}{section}

\title{Bounds for discrete multilinear spherical maximal functions}
\author[T. Anderson]{Theresa C. Anderson}
\address{
	Department of Mathematics
	\\ Purdue University
	\\ 150 N. University St.
	\\ W. Lafayette, IN 47907
	\\	U.S.A.
}
\email{tcanderson@math.purdue.edu}

\author[E. Palsson]{Eyvindur Ari Palsson}
\address{
	Department of Mathematics
	\\	Virginia Tech 
	\\	225 Stanger St.
	\\	Blacksburg, VA 24061
	\\	U.S.A.
}
\email{palsson@vt.edu}

\begin{document}
\maketitle
\tableofcontents

\begin{abstract}
We define a discrete version of the bilinear spherical maximal function, and show bilinear $l^{p}(\Z^d)\times l^{q}(\Z^d) \to l^{r}(\Z^d)$ bounds for $d \geq 3$, $\frac{1}{p} + \frac{1}{q} \geq \frac{1}{r}$, $r>\frac{d}{d-2}$ and  $p,q\geq 1$.  Due to interpolation, the key estimate is an $l^{p}(\Z^d)\times l^{\infty}(\Z^d) \to l^{p}(\Z^d)$ bound, which holds when $d \geq 3$, $p>\frac{d}{d-2}$.  A key feature of our argument is the use of the circle method which allows us to decouple the dimension from the number of functions compared to the work of Cook.
\end{abstract}

\section{Introduction}
The study of multilinear variants of continuous operators appearing in harmonic analysis is a rich area of study.  Another active area of investigation is determining bounds for discrete operators involving integration over a curved submanifold -- these operators often exhibit radically different behaviour than their continuous counterparts.  Discrete bilinear and multilinear variants have been significantly less studied.  In this paper we combine the themes of discreteness and multilinearity with the study of the discrete bilinear spherical maximal function.  Namely, we prove $l^{p}(\Z^d)\times l^{q}(\Z^d) \to l^{r}(\Z^d)$ bounds in an open region obtained by both simple discrete $l^p$ theory and interpolation with a key estimate, which we prove.  This estimate is an $l^{p}(\Z^d)\times l^{\infty}(\Z^d) \to l^{\infty}(\Z^d)$ bound for $p>\frac{d}{d-2}$ that is obtained using the circle method from analytic number theory.  While this application introduces a number of number theoretic obstacles, it also allows us to decouple the multilinearity (number of functions) with the dimension compared to the work of Cook \cite{Cook}.  The idea for approaching an $l^{p}(\Z^d)\times l^{\infty}(\Z^d) \to l^{\infty}(\Z^d)$ estimate is classic in the continuous setting, but in the particular case of the bilinear spherical maximal function it was first employed by Barrionuevo, Grafakos, He, Honz\'{i}k and Oliveira \cite{BGHHO}.  For simplicity, we work with the bilinear version of the discrete spherical maximal function in $\Z^{2d}$ but we comment on the more general results for the $l$-linear version in the last section.

The study of spherical maximal functions dates back to Stein \cite{St76} where they naturally arose in connection with the wave equation.  This operator is bounded on $L^p(\R^d)$ for $d \geq 2$, $p >\frac{d}{d-1}$ (Stein $d \geq 3$ \cite{St76}, Bourgain $d = 2$ \cite{B86}); these ranges are sharp.  Oberlin introduced a multilinear variant with functions on $\R$, and proved bounds from $L^p(\R)\times\ldots\times L^p(\R)$ into $L^q(\R)$ for $(\frac{1}{p}, \frac{1}{q})$ lying in a polygonal region \cite{Oberlin}.  Geba, Greenleaf, Iosevich, Sawyer and the second author \cite{Geba_et_al} were the first to consider a multilinear variant with functions in $\R^d$, and proved bounds in the bilinear setting of the type $L^p(\R^d)\times L^p(\R^d) \rightarrow L^{p'}(\R^d)$ for $1\leq p \leq 2$ and $d\geq 2$.  Barrionuevo, Grafakos, He, Honz\'{i}k and Oliveira \cite{BGHHO} expanded vastly on the results by Geba et al and obtained a wide range of H\"{o}lder type estimates $L^p(\R^d)\times L^q(\R^d) \rightarrow L^r(\R^d)$ for $d\geq 8$. This was improved to $d\geq 4$ by Grafakos, He and Honz\'{i}k \cite{GHH} and the range of estimates then slightly expanded by Heo, Hong and Yang \cite{HHYpreprint}. Finally, in a recent work, Jeong and Lee proved sharp bounds for the continuous bilinear spherical maximal function in $\R^{2d}$ \cite{JL} with their method clearly extending to higher levels of multilinearity.

Magyar, Stein and Wainger considered a discrete linear spherical maximal function, first introduced by Magyar \cite{Magyar_dyadic}, and proved bounds for $d\geq 5, p> \frac{d}{d-2}$; moreover they showed that this range was sharp in $d$ and $p$.  Cook studied a version of the discrete multilinear spherical maximal function analogous to Oberlin's work and similarly proved bounds of the type $l^p(\Z)\times\ldots\times l^p(\Z)\rightarrow l^q(\Z)$ \cite{Cook}.  We continue to further the investigation of discrete multilinear spherical maximal functions by introducing the circle method technique, allowing us to consider functions on $\Z^{d}$ and obtaining a wide range of estimates.  Our range is not sharp, but approaches sharp estimates as $d \to \infty$.  We relate some necessary conditions of multilinear spherical maximal functions in the opening section as well as sharpness examples after the proof of the main theorem.  An interesting open question is to fully determine the sharp range for this operator, thus providing the discrete analogue to \cite{JL}.

We now define our bilinear discrete (or integral) spherical maximal function (we comment on the multilinear version in the last section).  The operators that we consider extend those considered in \cite{Cook}, and our boundedness results complement these as well as the continuous bounds found in \cite{Oberlin}, \cite{Geba_et_al}, \cite{BGHHO}, \cite{GHH}, \cite{HHYpreprint}, \cite{JL}.
Note that our technique is different to the approach used in \cite{Cook}, we directly import the continuous bounds as a key step. Many papers have used this technique in the linear setting, such as \cite{MSW}, \cite{Magyar_ergodic}, \cite{Hughes_Vinogradov}, and \cite{ACHK2}.

The continuous spherical averages can be written as
\[
T_\lambda(f,g)(\bm{x}) = \int_{\lambda\mathbb{S}^{2d-1}}f(\bm{x}-\bm{u})g(\bm{x}-\bm{v})d\sigma_\lambda(\bm{u},\bm{v})
\]
where $\bm{u}$ and $\bm{v}$ are vectors in $\R^d$, and $d\sigma_\lambda$ is the continuous normalized spherical measure on $\lambda\mathbb{S}^{2d-1}$.  
We can rewrite this as a convolution operator:
\[
T_\lambda(f,g)(\bm{x}) = ((f\otimes g)*d\sigma_\lambda)(\bm{x},\bm{x}).
\]
Then the maximal operator is 
\[
T^*(f,g)(\bm{x}) := \sup_{\lambda >0}|T_\lambda(f,g)(\bm{x})|.
\]

Abusing notation, the discrete version that we will consider is
\[
T^*(f,g)(\bm{x}) = \sup_{\lambda \in \N}\left| \frac{1}{N(\lambda)}\sum_{\bm{u}^2+\bm{v}^2=\lambda}f(\bm{x}-\bm{u})g(\bm{x}-\bm{v}) \right|
\]
where $\bm{u}$, $\bm{v} \in \Z^d$ and $N(\lambda) = \#\{ (\bm{u},\bm{v}) \in \Z^d\times\Z^d: \bm{u}^2+\bm{v}^2=\lambda\}$ is the number of lattice points on the sphere of radius $\lambda^{1/2}$ in $\R^{2d}$, which by the Hardy-Littlewood asymptotic is approximately $\lambda^{d-1}$ if the distribution is \emph{regular}.  Here $\bm{u}^2$ is shorthand for $u_1^2+ \dots + u_d^2$.  This operator can also be thought of as 
\[
T^*(f,g)(\bm{x}) = \sup_{\lambda >0}\left| ((f\otimes g)*\sigma_\lambda)(\bm{x},\bm{x}) \right|.
\]
where this time, $\sigma_\lambda (\bm{u},\bm{v}) =\frac{1}{N(\lambda)}\chi_{\{\bm{u}\in\Z^d, \bm{v}\in\Z^d: \bm{u}^2+\bm{v}^2 = \lambda\}}$ is the normalized arithmetic (probability) surface measure. For $N(\lambda)$ to be regular, we need $2d > 4$, or $d \geq 3$.  We will assume regularity throughout the paper.

We will prove the following:
\begin{thm}
\label{Main theorem}
$T^*$ is bounded $l^{p}(\Z^d)\times l^{\infty}(\Z^d) \to l^{p}(\Z^d)$ for all $d \geq 3$, $p >\frac{d}{d-2}$.
\end{thm}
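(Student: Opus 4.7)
The plan is to treat $T^*$ as a bilinear object but, using the fact that one factor sits in $l^\infty$, collapse it to a linear maximal convolution on $\Z^d$ that can be attacked with the Magyar--Stein--Wainger circle-method machinery. Pulling $\|g\|_\infty$ outside the inner sum gives pointwise
\[
|T^*(f,g)(\bm{x})| \leq \|g\|_\infty \sup_{\lambda\in\N} \frac{1}{N(\lambda)} \sum_{\bm{u}\in\Z^d} r_d(\lambda - \bm{u}^2)\, |f(\bm{x}-\bm{u})| =: \|g\|_\infty \cdot M^* |f|(\bm{x}),
\]
where $r_d(m) := \#\{\bm{v}\in\Z^d: \bm{v}^2 = m\}$ is the number of representations as a sum of $d$ squares. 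The kernel $K_\lambda(\bm{u}) := r_d(\lambda-\bm{u}^2)/N(\lambda)$ is a probability density on $\Z^d$ supported in the discrete ball $|\bm{u}|\leq \sqrt{\lambda}$, so it suffices to prove $\|M^* h\|_{l^p(\Z^d)} \lesssim \|h\|_{l^p(\Z^d)}$ for $p > d/(d-2)$ and all non-negative $h$.

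To analyze $K_\lambda$ I would apply the Hardy--Littlewood circle method with $\lambda$ dual to $\alpha\in\T$: writing $r_d(\lambda-\bm{u}^2) = \int_0^1 G_d(\alpha)\, e^{-2\pi i (\lambda-\bm{u}^2)\alpha}\, d\alpha$ for a truncated Weyl sum $G_d(\alpha) = \sum_{|\bm{v}|\leq \sqrt\lambda} e^{2\pi i \bm{v}^2\alpha}$, then splitting $\T$ into major arcs (around $a/q$ with $q\leq Q \approx \lambda^{\delta}$) and minor arcs yields a decomposition $K_\lambda = \M_\lambda + \E_\lambda$. I expect the major-arc piece to take the schematic form
\[
\widehat{\M_\lambda}(\bm{\xi}) \approx \sum_{q\leq Q}\sum_{\bm{b}\,\bmod\, q} \mathfrak{S}(\lambda;\bm{b},q)\, \chi_q(\bm{\xi}-\bm{b}/q)\, \widehat{\Omega_\lambda}(\bm{\xi}-\bm{b}/q),
\]
where $\Omega_\lambda$ is the continuous analog of $K_\lambda$---a smooth radial probability density on $|\bm{u}|\leq \sqrt\lambda$ proportional to $(\lambda-|\bm{u}|^2)_+^{d/2-1}$ obtained by projecting the continuous surface measure on the $(2d-1)$-sphere onto the first $d$ coordinates---the $\mathfrak{S}(\lambda;\bm{b},q)$ are normalized Gauss sums, and $\chi_q$ is a bump cutoff at scale $\lesssim 1/(qQ)$.

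For the main term I would invoke a Magyar--Stein--Wainger style transference to reduce the $l^p(\Z^d)$ maximal bound for $\M_\lambda$ to the $L^p(\R^d)$ maximal bound for $\sup_\lambda |f*\Omega_\lambda|$; since $\Omega_\lambda$ is a compactly supported, smooth, radially decreasing density, this continuous maximal operator is dominated by the Hardy--Littlewood ball maximal function and is $L^p$-bounded for every $p>1$, while the arithmetic singular series $\sum_q |\mathfrak{S}(\lambda;\bm{b},q)|$ is controlled by standard Gauss-sum estimates. For the minor-arc error, Weyl-type estimates should give $\|\widehat{\E_\lambda}\|_{L^\infty(\T^d)} \lesssim \lambda^{-\eta}$ for some $\eta = \eta(d)>0$, whence Plancherel yields $\|\E_\lambda * f\|_{l^2}\lesssim \lambda^{-\eta}\|f\|_{l^2}$ and the trivial bound $\|\E_\lambda\|_{l^1}\lesssim 1$ yields $\|\E_\lambda * f\|_{l^\infty}\lesssim \|f\|_{l^\infty}$; Riesz--Thorin interpolation combined with a dyadic decomposition in $\lambda$ and a Rademacher--Menshov type argument passing to the supremum then produces the full maximal $l^p$ bound. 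The principal obstacle, I expect, will be this last stage: identifying the correct Weyl exponent $\eta$ for the weighted count $r_d(\lambda-\bm{u}^2)$, and then calibrating the interpolation and dyadic summation precisely enough so that the $\lambda^{-\eta}$ gain from Plancherel exactly offsets the logarithmic and polynomial losses in $\lambda$ incurred when passing to the maximal operator, recovering the critical threshold $p = d/(d-2)$ rather than a weaker one.
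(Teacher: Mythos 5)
Your overall architecture coincides with the paper's: pull $\|g\|_{l^\infty}$ out to reduce to the linear operator with kernel $r_d(\lambda-\bm{u}^2)/N(\lambda)$, run the circle method on that kernel, treat the major-arc main term as (Gauss sums) $\times$ (continuous piece) via Magyar--Stein--Wainger transference, and kill the minor arcs with Weyl plus interpolation. The one genuinely different ingredient is your treatment of the continuous part of the main term: you project the $(2d-1)$-dimensional surface measure down to $\R^d$, obtaining the density $c_\lambda(\lambda-|\bm{u}|^2)_+^{d/2-1}$, and dominate the resulting maximal operator by the Hardy--Littlewood maximal function. The paper instead keeps the full $2d$-dimensional multiplier $\RFT{d\sigma}_{\lambda^{1/2}}$, transfers to $\Z^{2d}$, and invokes the continuous bilinear spherical maximal theorem of Jeong--Lee, then restricts to the slice $\bm{\eta}=\bm{0}$ by feeding in $\delta_0$ as the second function. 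Your route is more elementary (no bilinear continuous input needed for this theorem) and gives the same $p>1$ range for the analytic piece; the paper's route is what generalizes naturally when neither function is in $l^\infty$. Both approaches locate the restriction $p>\frac{d}{d-2}$ in the same place: the sum over $q$ of the Gauss-sum factors $q^{-d(1-1/p)}$, not in the minor arcs.

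Two soft spots you should repair. First, your closing worry is aimed at the wrong target: the minor arcs do not need to "recover the critical threshold $p=d/(d-2)$"; they only need to be summable for $p$ down to that threshold, and in fact they work for the wider range $p>\frac{d}{d-1}$. Second, your mechanism for passing to $\sup_\lambda$ on the minor arcs is the genuinely delicate step and is left vague. A Rademacher--Menshov or $\ell^2_\lambda$-summation over the $\approx\Lambda$ values of $\lambda$ in a dyadic block costs a factor $\Lambda^{1/2}$ (or at best logarithms with extra structure), which can eat your Weyl gain in low dimensions. The clean fix, which the paper uses, is to bound the supremum \emph{before} taking any norm: since
\begin{equation*}
\sup_{\lambda\in[\Lambda,2\Lambda)}\Big|\int_m F(\theta,\bm{\xi})F(\theta)e(-\lambda\theta)\,d\theta\Big|\le\int_m\sup_{\bm{\xi}}|F(\theta,\bm{\xi})||F(\theta)|\,d\theta,
\end{equation*}
the $\lambda$-supremum is absorbed uniformly, Weyl gives the integrand size $N^{d+\varepsilon}$, and dividing by $N(\Lambda)\approx N^{2d-2}$ yields decay $N^{-(d-2)+\varepsilon}$ with no loss from the supremum; one then interpolates with an $l^1\to l^1$ bound (which is $O(N^2)$, not $O(1)$, once the supremum has been replaced by the minor-arc integral). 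You should also account explicitly for the major-arc approximation error (truncating/extending the $\beta$-integral and inserting the cutoffs), which the paper isolates as a separate term $A_\lambda-M_\lambda$ and controls by Magyar's $l^2$ estimate interpolated with a trivial bound. With those steps filled in, your argument closes.
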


\begin{remark}
Sharpness examples provided at the end of the proof of Theorem \ref{Main theorem} in section \ref{proof of main theorem} show that for $l^{p}(\Z^d)\times l^{\infty}(\Z^d) \to l^{p}(\Z^d)$ bounds to hold when $d\geq 3$ we must have $p>1$. As $d \to \infty$ our result approaches the sharp range.
\end{remark}

\begin{remark}
By symmetry, we also get that $T^*$ is bounded $l^{\infty}(\Z^d)\times l^{p}(\Z^d) \to l^{p}(\Z^d)$ for all $p> \frac{d}{d-2}$.  We can interpolate these bounds to get all points on the line including the $l^{2p}(\Z^d)\times l^{2p}(\Z^d) \to l^{p}(\Z^d)$ bounds for all $p> \frac{d}{d-2}$; these lines approach the line containing the $l^{2}(\Z^d)\times l^{2}(\Z^d) \to l^{1}(\Z^d)$ bounds as $d \to \infty$ so we approach the full Banach range of estimates as $d \to \infty$.  By trivially estimating the operator in $l^\infty(\Z^d)$, we also have that $T^*$ is bounded on $l^{\infty}(\Z^d)\times l^{\infty}(\Z^d) \to l^{\infty}(\Z^d)$.  Interpolating these three bounds and noting the nesting properties of the discrete $l^p$ spaces, that is: 
\[
\|f\|_{l^q(\Z^d)} \leq \|f\|_{l^p(\Z^d)} \text{ for all } 1 \leq p \leq q \leq \infty,
\]
leads to the Corollary below.
\end{remark}

\begin{cor}
$T^*$ is bounded $l^{p}(\Z^d)\times l^{q}(\Z^d) \to l^{r}(\Z^d)$ for all $d \geq 3$, $\frac{1}{p} + \frac{1}{q} \geq \frac{1}{r}$, $r>\frac{d}{d-2}$ and  $p,q\geq 1$.
\end{cor}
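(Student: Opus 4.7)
The strategy is the one sketched in the remark preceding the corollary: combine the $l^p \times l^\infty \to l^p$ bound of Theorem \ref{Main theorem}, its mirror $l^\infty \times l^p \to l^p$, and the trivial $l^\infty \times l^\infty \to l^\infty$ bound via bilinear interpolation, then open up the region using the nesting of discrete $l^p$ spaces.

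The symmetric estimate is immediate because the kernel $\sigma_\lambda(\bm{u},\bm{v})$ is invariant under swapping $\bm{u}$ and $\bm{v}$. The trivial bound follows from $\sigma_\lambda$ being a probability measure, so that $|T_\lambda(f,g)(\bm{x})| \leq \|f\|_{l^\infty(\Z^d)} \|g\|_{l^\infty(\Z^d)}$ pointwise and uniformly in $\lambda$. To turn these three endpoints into a family of estimates I linearize $T^*$ by selecting, for each $\bm{x}$, a scale $\lambda(\bm{x})$ that nearly attains the supremum; bilinear real (Marcinkiewicz-type) interpolation then applies to the resulting operator $T_{\lambda(\cdot)}$ with constants independent of the selection.

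Fix $p > d/(d-2)$. Interpolating the two Theorem \ref{Main theorem} endpoints yields $l^\alpha \times l^\beta \to l^p$ along the segment $1/\alpha + 1/\beta = 1/p$ with $\alpha,\beta \in [p,\infty]$. A further interpolation of this one-parameter family against the $l^\infty \times l^\infty \to l^\infty$ corner produces
\[
T^* : l^P(\Z^d) \times l^Q(\Z^d) \to l^R(\Z^d), \qquad \tfrac{1}{P} + \tfrac{1}{Q} = \tfrac{1}{R}, \quad P, Q \geq R, \quad R > \tfrac{d}{d-2}.
\]

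To finish, let $(p, q, r)$ be an arbitrary triple satisfying the corollary's hypotheses. Set $R = r$ and choose $1/P \in [0, \min(1/p, 1/r)]$ and $1/Q \in [0, \min(1/q, 1/r)]$ with $1/P + 1/Q = 1/r$. The hypothesis $1/p + 1/q \geq 1/r$ is exactly what guarantees such a choice exists (verified by a short case split on whether $p$ or $q$ exceeds $r$). The previously-established $l^P \times l^Q \to l^r$ bound then implies $l^p \times l^q \to l^r$ by applying the nesting inequality $\|h\|_{l^P(\Z^d)} \leq \|h\|_{l^p(\Z^d)}$ to each input separately. The only genuinely delicate point is the linearization step used to justify bilinear interpolation for a supremum operator; the remainder is bookkeeping on exponent triples.
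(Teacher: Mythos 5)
Your proposal is correct and follows essentially the same route as the paper, which proves the corollary in the remark preceding it: the symmetric endpoint $l^\infty\times l^p\to l^p$, the trivial $l^\infty\times l^\infty\to l^\infty$ bound, bilinear interpolation to cover the H\"older line $\tfrac1P+\tfrac1Q=\tfrac1R$ with $R>\tfrac{d}{d-2}$, and the nesting of discrete $l^p$ spaces to open up the region. Your explicit exponent bookkeeping and your flagging of the linearization needed to interpolate a supremum operator are sound elaborations of details the paper leaves implicit.
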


The key feature of this Corollary is the wide range of H\"older estimates obtained, while the broader estimates follow immediately from the nesting property of the discrete $l^p$ spaces. As mentioned it would be interesting to see what the full range of bounds (and most importantly the full H\"older range) for this operator are.

%\begin{cor}
%$T^*$ is bounded for all $(\frac{1}{p}, \frac{1}{q})$ in the open triangle consisting of endpoints $(0,0), (0, \frac{d-2}{d}), %(\frac{d-2}{d},0)$. This triangle represents the H\"older range that we can obtain (that is, where $ \frac{1}{p} +\frac{1}{q} = %\frac{1}{r}$).  Using the nesting properties, we get bounds on the square consisting of the points $(0,0), (0, \frac{d-2}{d}), %(\frac{d-2}{d},0)$ and $(\frac{d-2}{d},\frac{d-2}{d})$.  This square represents $l^{p}(\Z^d)\times l^{q}(\Z^d) \to l^{r}(\Z^d)$ %bounds where $r = \min\{p,q\}$. 
%\end{cor}
%The square region mentioned above simply comes from applying the nesting properties, which allow us to lower $p$ and $q$ and raise %$r$.  For instance, the point $(\frac{d-2}{d},\frac{d-2}{d})$ represents $l^{\frac{d}{d-2}}(\Z^d)\times l^{\frac{d}{d-2}}(\Z^d) %\to l^{\frac{d}{d-2}}(\Z^d)$ bounds. This larger range includes the H\"older range; as mentioned it would be interesting to see %what the full range of bounds (including the full H\"older range) for this operator are.

The paper is organized as follows: we begin with some necessary conditions for boundedness in Section \ref{necessary conditions section}.  In Section \ref{circle method section} we use the circle method to decompose our operator.  We handle the error from the minor arcs in Section \ref{minor arc section}, the rest of the error in Section \ref{majorarcerror}, and the main term from the decomposition in Section \ref{mainterm}, where we prove Theorem \ref{Main theorem}.  We comment on multilinear extensions in the final section. 

\subsection{Acknowledgements}
T. C. Anderson was supported in part by NSF DMS-1502464.  E. A. Palsson was supported in part by Simons Foundation Grant \#360560.

\section{Necessary conditions}
\label{necessary conditions section}
We begin by relating some necessary bounds for the bilinear (and multilinear) operators that we consider.  Note that if we know that an operator $T$ is bounded on $l^{p_0}(\Z^d)\times l^{q_0}(\Z^d) \to l^{r_0}(\Z^d)$, then we automatically get all bounds $l^{p}(\Z^d)\times l^{q}(\Z^d) \to l^{r}(\Z^d)$ for all $p \leq p_0, q \leq q_0, r \geq r_0$ due to the nestedness properties of the discrete norms.

\begin{lemma}
If $T^*(f_1, \dots , f_m)$ is bounded on $l^{p_1}(\Z^d)\times\cdots \times l^{p_m}(\Z^d) \to l^{r}(\Z^d)$, then  $\frac{1}{r} \leq \frac{1}{p_1}+\cdots +\frac{1}{p_m}$. 
\end{lemma}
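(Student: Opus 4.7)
The plan is to run a standard scaling/testing argument using indicator functions of large balls. Let $B_R \subset \Z^d$ denote the ball of radius $R$ centered at the origin, and take the test functions $f_i = \mathbf{1}_{B_R}$ for $i=1,\dots,m$. Since $|B_R| \asymp R^d$, we have
\[
\|f_i\|_{l^{p_i}(\Z^d)} \asymp R^{d/p_i}.
\]

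The next step is to bound $T^*(f_1,\dots,f_m)(\bm{x})$ from below on a large set. Fix $\bm{x} \in B_{R/2}$ and choose any single $\lambda \in \N$ with $\lambda \leq R^2/(4m)$ for which $N(\lambda) \gs \lambda^{(md-2)/2}$ is nonzero (such $\lambda$ exist in abundance by the regularity assumption $md \geq 5$, appropriately generalized from $2d\geq 5$). For any lattice tuple $(\bm{u}_1,\dots,\bm{u}_m)$ satisfying $\bm{u}_1^2+\cdots+\bm{u}_m^2 = \lambda$, each $|\bm{u}_i|\leq \sqrt{\lambda} \leq R/2$, so $|\bm{x}-\bm{u}_i|\leq R$, meaning $\bm{x}-\bm{u}_i \in B_R$ and $f_i(\bm{x}-\bm{u}_i)=1$. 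Hence the full unnormalized sum equals $N(\lambda)$, and
\[
T^*(f_1,\dots,f_m)(\bm{x}) \geq \frac{1}{N(\lambda)} \sum_{\bm{u}_1^2+\cdots+\bm{u}_m^2 = \lambda} 1 = 1
\qquad \text{for all } \bm{x} \in B_{R/2}.
\]
Taking $l^r$ norms gives $\|T^*(f_1,\dots,f_m)\|_{l^r(\Z^d)} \geq |B_{R/2}|^{1/r} \asymp R^{d/r}$.

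Combining with the assumed boundedness,
\[
R^{d/r} \lesssim \|T^*(f_1,\dots,f_m)\|_{l^r(\Z^d)} \leq C \prod_{i=1}^m \|f_i\|_{l^{p_i}(\Z^d)} \asymp R^{d(1/p_1+\cdots+1/p_m)}.
\]
Letting $R\to\infty$ forces the exponents to satisfy $\frac{d}{r} \leq d\bigl(\frac{1}{p_1}+\cdots+\frac{1}{p_m}\bigr)$, which is the claimed inequality.

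The only nontrivial point is ensuring that one can locate $\lambda$ of the required size with $N(\lambda) > 0$ in the discrete setting; this is where the regularity hypothesis on the number of representations (stated in the paper just before Theorem \ref{Main theorem}) gets used. Once that is in hand the rest is entirely elementary counting, and the argument is essentially the same as the classical proof that $L^p$-boundedness of a positive, convolution-type averaging operator forces a linear relation among the Hölder exponents.
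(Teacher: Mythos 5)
Your proposal is correct and is essentially the paper's own argument: both test the operator against indicators of large cubes/balls, observe that the average equals a constant on a set of cardinality $\asymp R^d$ (so $\|T^*(f_1,\dots,f_m)\|_{l^r}\gtrsim R^{d/r}$), and compare exponents as $R\to\infty$. If anything your write-up is slightly more careful at the lower-bound step (fixing one admissible $\lambda\le R^2/(4m)$ so the average is exactly $1$ on $B_{R/2}$; note $\lambda=1$ always works, so full regularity of $N(\lambda)$ is not even needed), whereas the paper phrases that step more loosely.
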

Therefore for $T^*$ to be bounded on $l^{p}(\Z^d)\times l^{q}(\Z^d) \to l^{r}(\Z^d)$, we need $\frac{1}{r} \leq \frac{1}{p}+\frac{1}{q}$. (So in the bilinear case, the best $l^2(\Z^d)$ bounds we can expect are $l^2(\Z^d)\times l^2(\Z^d) \to l^1(\Z^d)$ bounds). 
 
\begin{proof}
We focus on the bilinear setting -- minor modifications yield the multilinear result.  We also focus on the case when $1 \leq p,q,r <\infty$.
We use a scaling argument:
Let $f = \chi_{[0,L)}$ and $g = \chi_{[0,L)}$.  Then we have that 
\[
\|T^*(f\otimes g)(\bm{y})\|_{l^r(\Z^d)} = (\sum_{\bm{y}\in\Z^d}(\sup_{\lambda} \lambda^{1-d} \sum_{\bm{u}^2+\bm{v}^2=\lambda}\chi_{[0,L)^d}(\bm{y}-\bm{u})\chi_{[0,L)^d}(\bm{y}-\bm{v}))^r)^{1/r}.
\]
For each $\bm{y}$, the inner expression will be nonzero only if $\bm{u}^2 + \bm{v}^2 = \lambda$ and $u_i \leq y_i<u_i+L$ as well as $v_i\leq y_i < v_i+L$ for all $1\leq i \leq d$.  Since $L$ is fixed, when $\lambda$ gets large, there are $L^d$ such $\bm{y}$ that contribute to the sum, giving
\[
(L^d(\sup_{\lambda} \lambda^{1-d}\#\{|\bm{u}|^2 + |\bm{v}|^2 = \lambda\}^r))^{1/r} \lesssim L^{d/r}
\]
since there are asymptotically $\lambda^{d - 1}$ such $(\bm{u},\bm{v})$.  On the other hand by an even simpler calculation
\[
\|f\|_{l^p(\Z^n)}\| g\|_{l^q(\Z^n)} = L^{d/p}L^{d/q}
\]
Hence to have $L^{d/r} \lesssim L^{d/p}L^{d/q}$ we must have $\frac{1}{r} \leq \frac{1}{p}+\frac{1}{q}$, or more generally for the $m$-linear variant, $\frac{1}{r} \leq \frac{1}{p_1}+\cdots +\frac{1}{p_m}$.

\end{proof}

\section{Set up and decomposition}
\label{circle method section}
We now turn to the proof of Theorem \ref{Main theorem}.  The first key point to note is that we can pull out the function $g$ in $l^\infty$ norm and reduce matters to considering $l^p(\Z^d)\to l^p(\Z^d)$ bounds for an operator $T_0$, see for example Barrionuevo et al \cite{BGHHO}; indeed we have
\begin{equation}
    T^*(f,g)(\bm{x}) \leq \|g\|_{l^\infty(\Z^d)}\cdot T^*_0(|f|)(\bm{x})
\end{equation}
where 
\begin{equation}
    T_0(f)(\bm{x}) :=  \frac{1}{N(\lambda)}\sum_{\bm{u}^2+\bm{v}^2=\lambda}f(\bm{x}-\bm{u}).
\end{equation}
and 
\begin{equation}
    T^*_0(f) :=  \sup_{\lambda >0}|T_0(f)|.
\end{equation}

Therefore we have that $T_0(|f|) = \left(|f|\otimes 1 \right)*\sigma_\lambda$, so 

\[\widehat{T_0(|f|)}(\bm{\xi}) = \left(\widehat{|f|}\otimes \delta_0 \right)(\bm{\xi})\cdot\hat{\sigma}_\lambda(\bm{\xi}) = \frac{1}{N(\lambda)}\sum_{\bm{u}^2+\bm{v}^2=\lambda}\widehat{|f|}(\bm{\xi})e(\bm{u}\cdot\bm{\xi})
\]
where $\bm{\xi}\in \T^d$ and $e(x) = e^{2\pi ix}$.

So we can rewrite
\[
\widehat{T_0(|f|)} = \widehat{|f|}\hat{\sigma}_{\lambda, 0}
\]
where 
\begin{equation}
    \hat{\sigma}_{\lambda, 0}(\bm{\xi}) = \frac{1}{N(\lambda)}\sum_{\bm{u}^2+\bm{v}^2=\lambda}e(\bm{u}\cdot\bm{\xi})
\end{equation}

We will start by using the circle method to decompose the Fourier transform of the arithmetic surface measure $\hat{\sigma}_{\lambda,0} (\xi)$.
The circle method will lead us to the following decomposition:
\begin{equation}
\label{maindecomposition}
   T_0 = M_\lambda + E_\lambda = M_\lambda + (A_\lambda - M_\lambda) + E_{m,\lambda} := I+II+III
\end{equation}
where the term I is the main term coming from the major arcs, term II is the major arc approximation error term, and term II is the error term coming from the minor arcs (we emphasize that $T_0$ depends on $\lambda$ even though we suppress this notation).  We will prove $l^{p}(\Z^d) \to l^p(\Z^d)$ bounds for the maximal operators arising from each of these terms.  The process begins in a similar manner to \cite{MSW} and \cite{Magyar_ergodic}.

\label{minorarcs}
Let $\Lambda \leq \lambda < 2\Lambda $ and call $N = \Lambda^{1/2}$.
Applying the circle method to $\hat{\sigma}_{\lambda,0}$, we get that 
\[
\hat{\sigma}_{\lambda,0} = \frac{1}{N(\lambda)}\sum_{0\leq u_i,v_i \leq N}e(\bm{u}\cdot\bm{\xi})\int_{\T}e(\theta(\bm{u}^2+\bm{v}^2 - \lambda))d\theta
\]
\[
 = \frac{1}{N(\lambda)}\int_{\T}\prod_{i=1}^d\sum_{u_i \leq N}e(\theta u_i^2+\xi_i u_i)\prod_{j=1}^d\sum_{v_j\leq N}e(\theta v_j^2)e(-\lambda\theta)d\theta
 := \int_{\T}\prod_{i=1}^d S_N(\theta, \xi_i)\prod_{j=1}^d S_N(\theta)e(-\lambda\theta)d\theta 
\]
\[
:= \frac{1}{N(\lambda)}\int_{\T}F(\theta, \bm{\xi})F(\theta)e(-\lambda\theta)d\theta. 
\]

We will decompose this Fourier transform as $\hat{\sigma}_{\lambda,0} = \hat{M}_\lambda+\hat{E}_\lambda$, where $\hat{M}_\lambda$ will come from the major arcs and $\hat{E}_\lambda$ will come from the minor arc piece as well as error from the major arc approximation.
Define the major arc centered at the rational $a/q$
\[
M_{a/q} := \{\theta\in\T : |\theta-\frac{a}{q}|\leq \frac{1}{8qN}\},
\]
and the major arcs
\[
M:= \bigcup_{1\leq q \leq N}\bigcup_{(a,q)=1, a\leq q}M_{a/q}
\]
and let $m = \T\setminus M$ be the minor arcs.

On the major arcs, let $\theta = \frac{a}{q}+\beta$ where $|\beta| \leq \frac{1}{8qN}$ and split $\bm{u} = q\bm{\tilde{u}}+\bm{y}, \bm{v} = q\bm{\tilde{v}}+ \bm{z}$ into residue classes.  Define a smooth compactly supported function $\Phi(\bm{x})$ such that $\Phi(\bm{x}) = 1$ for $|\bm{x}| <1$.  Then on $M_{a/q}$, we have
\[
\frac{1}{N(\lambda)}\int\sum_{\bm{y}\in \Z_q^d}\sum_{\bm{z}\in \Z_q^d}\sum_{\bm{\tilde{u}}\in \Z^d}\sum_{\bm{\tilde{v}}\in\Z^d}e((\frac{a}{q}+\beta)(q\bm{\tilde{u}}+\bm{y})^2+\bm{\xi}\cdot (q\bm{\tilde{u}}+\bm{y})+(\frac{a}{q}+\beta)(q\bm{\tilde{v}}+\bm{z})^2)\Phi_1(\frac{\bm{u}}{N})\Phi_2(\frac{\bm{v}}{N})e(-\lambda(\frac{a}{q}+\beta))d\beta
\]
Let $B(\bm{x}) = e(\beta \bm{x}^2)\Phi(\frac{\bm{x}}{N})$.  We apply Poisson summation to get
\[
\frac{1}{N(\lambda)}e(\frac{-\lambda a}{q})q^{-d}\sum_{\bm{y}\in \Z_q^d}\sum_{\bm{l}\in \Z^d}e(\frac{a\bm{y}^2}{q})e(\frac{\bm{l}\cdot \bm{y}}{q})q^{-d}\sum_{\bm{z}\in \Z_q^d}\sum_{\bm{m}\in\Z^d}e(\frac{\bm{m}\cdot \bm{z}}{q})\int_{|\beta| <\frac{1}{8Nq}}e(-\lambda\beta)\hat{B}(\bm{\xi}-\frac{\bm{l}}{q})\hat{B}(-\frac{\bm{m}}{q})d\beta.
\]
We define the Gauss sum $G(\bm{l},a,q) = q^{-d}\sum_{\bm{y}\in \Z_q^d}e(\frac{a\bm{y}^2}{q})e(\frac{\bm{l}\cdot \bm{y}}{q})$ and the sum $G(\bm{m},0,q) = q^{-d}\sum_{\bm{z}\in \Z_q^d}e(\frac{\bm{m}\cdot \bm{z}}{q})$.  Note that the sum $G(\bm{m},0,q)$, despite our notation, is not a Gauss sum since there is no quadratic term.   Due to the exponential integral piece $\hat{B}(-\frac{\bm{m}}{q})$, we cannot use orthogonality, and with this in mind, the above equals
\begin{equation}
\label{Aoperator}
\hat{A}^{a/q}_\lambda :=\frac{1}{N(\lambda)}e(\frac{-\lambda a}{q})\sum_{\bm{l}\in \Z^d}G(\bm{l},a,q)\sum_{\bm{m}\in\Z^d}G(\bm{m},0,q)\int\limits_{|\beta|\leq 1/8qN} e(-\lambda\beta)\hat{B}(\bm{\xi}-\frac{\bm{l}}{q})\hat{B}(-\frac{\bm{m}}{q})d\beta.
\end{equation}
We insert smooth cutoff functions $\Psi_1$, $\Psi_2$ (where $\Psi_1(\bm{\xi}) = 1$ for $|\bm{\xi}| < 1$ and similarly for $\Psi_2$) to define the approximate multiplier
\begin{equation}
\label{Boperator}
\hat{B}^{a/q}_\lambda :=
\frac{1}{N(\lambda)}e(\frac{-\lambda a}{q})\sum_{\bm{l}\in \Z^d}G(\bm{l},a,q)\sum_{\bm{m}\in\Z^d}G(\bm{m},0,q)\Psi_1(q\bm{\xi}-\bm{l})\Psi_2(-\bm{m})\int\limits_{|\beta|\leq 1/8qN} e(-\lambda\beta)\hat{B}(\bm{\xi}-\frac{\bm{l}}{q})\hat{B}(-\frac{\bm{m}}{q})d\beta.
\end{equation}
Note that only the $\bm{m}=0$ term contributes to $\Psi_2$.  One may wonder why $\Psi_2$ was inserted, since it always localizes to the zero frequency.  The reason for inserting such a localization is important for the main term analysis and we comment on this then.

Now extend the integration to the whole real line to define the approximate multiplier:
\begin{equation}
\label{Coperator}
\hat{C}^{a/q}_\lambda := \frac{1}{N(\lambda)}e(\frac{-\lambda a}{q})\sum_{\bm{l}\in \Z^d}G(\bm{l},a,q)\sum_{\bm{m}\in\Z^d}G(\bm{m},0,q)\Psi_1(q\bm{\xi}-\bm{l})\Psi_2(-\bm{m})\int_\R e(-\lambda\beta)\hat{B}(\bm{\xi}-\frac{\bm{l}}{q})\hat{B}(-\frac{\bm{m}}{q})d\beta.
\end{equation}
Now we can identify, as in \cite{Stein93} (note that we have replaced sharp cutoffs with smooth ones) the exponential integral in $\beta$ with
\[
\RFT{d\sigma}_{\lambda^{1/2}}((\bm{\xi}\otimes \bm{0})-(\frac{\bm{l}}{q}\otimes\frac{\bm{m}}{q}))
\]
which is the continuous spherical surface measure on the sphere of radius $\lambda^{1/2}$ in $\R^{2d}$, so $\hat{C}^{a/q}_\lambda = \hat{M}^{a/q}_\lambda$.  Note that this symbol enjoys the Fourier decay
\begin{equation}
\label{Fourier decay}
    \RFT{d\sigma}((\bm{\xi}\otimes\bm{\eta})) \lesssim (1+|\bm{\xi}|+|\bm{\eta}|)^{-\frac{2d-1}{2}}.
\end{equation}
Summing over $q,a$, we have that 
\[
\hat{M}_\lambda = \sum_{q=1}^N\sum_{a\in\Z_q}e(\frac{-\lambda a}{q})\sum_{\bm{l}\in \Z^d}G(\bm{l},a,q)\sum_{\bm{m}\in\Z^d}G(\bm{m},0,q)\Psi_1(q\bm{\xi}-\bm{l})\Psi_2(-\bm{m})\RFT{d\sigma}_{\lambda^{1/2}}((\bm{\xi}\otimes \bm{0})-(\frac{\bm{l}}{q}\otimes\frac{\bm{m}}{q})).
\]

\section{Minor arcs}
\label{minor arc section}
Here we show $l^p(\Z^d)\to l^p(\Z^d)$ bounds for the minor arc multiplier
\[
\hat{E}_{m,\lambda} = \frac{1}{N(\lambda)}\int_{m}F(\theta, \bm{\xi})F(\theta)e(-\lambda\theta)d\theta. 
\]
This approach follows \cite{ACHK} with minor changes.  We sketch the details.

We proceed by showing an $l^2(\Z^d)\to l^2(\Z^d)$ bound for a dyadic version of the operator $E_{m,\lambda}$, with some power decay in $N$, that is
%{\color{red} Please check that the sups over $\Lambda$ should not be sups over N.  I think I did this ok, but I could be wrong (little lambda should be in the range of big lambda to 2 times big lambda, and big lambda is equal to $N^3$).  Typos are so easy to make here, but are easy to fix.}
\begin{equation}
\label{dyadicerrordecay}
\|\sup_{\lambda\in [\Lambda, 2\Lambda)}|E_{m,\lambda} |\|_{l^2(\Z^d)\to l^2(\Z^d)} \lesssim N^{-\delta}
\end{equation}
for some $\delta >0.$

First, we adopt a proposition from \cite{ACHK}.  Its proof is very similar, but we include a brief sketch for completion.
\begin{prop}
$\|\sup_{\lambda\in [\Lambda, 2\Lambda)}|E_{m,\lambda} |\|_{l^2(\Z^d)\to l^2(\Z^d)} \lesssim \frac{1}{N(\Lambda)}\int_m\sup_{\bm{\xi}\in\T^d}|F(\theta, \bm{\xi})||F(\theta)|d\theta$
\end{prop}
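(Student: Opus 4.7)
The plan is to exploit the soft nature of the $\lambda$-dependence in $\hat{E}_{m,\lambda}(\bm{\xi})$: it enters only through the unimodular phase $e(-\lambda\theta)$ and the normalization $1/N(\lambda)$, both of which are trivially controlled. Regularity of the representation numbers gives $N(\lambda) \asymp N(\Lambda)$ for $\lambda \in [\Lambda, 2\Lambda)$, so no oscillation in $\lambda$ needs to be exploited; a crude term-by-term estimate inside the $\theta$-integral will suffice.

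First, I would apply Fubini to swap the $\T^d$ Fourier inversion integral with the $\theta$-integral and write
\[
E_{m,\lambda} f(\bm{x}) \;=\; \frac{1}{N(\lambda)}\int_m F(\theta)\, e(-\lambda\theta)\, G_\theta f(\bm{x})\, d\theta,
\]
where $G_\theta$ denotes, for each fixed $\theta$, the Fourier multiplier operator on $\Z^d$ with symbol $\bm{\xi} \mapsto F(\theta,\bm{\xi})$. Putting absolute values inside and using $1/N(\lambda) \lesssim 1/N(\Lambda)$ yields the pointwise domination
\[
\sup_{\lambda\in[\Lambda,2\Lambda)} |E_{m,\lambda} f(\bm{x})| \;\lesssim\; \frac{1}{N(\Lambda)} \int_m |F(\theta)|\, |G_\theta f(\bm{x})|\, d\theta.
\]

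Next, I would take the $l^2(\Z^d)$-norm in $\bm{x}$ of both sides and apply Minkowski's integral inequality to pull the norm inside the $\theta$-integral. Plancherel on $\T^d$ then gives the symbol-by-symbol bound $\|G_\theta f\|_{l^2(\Z^d)} \leq \sup_{\bm{\xi}\in\T^d} |F(\theta,\bm{\xi})| \cdot \|f\|_{l^2(\Z^d)}$. Combining these two inputs and dividing through by $\|f\|_{l^2(\Z^d)}$ produces exactly the claimed operator-norm bound.

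There is essentially no obstacle in this step: Fubini is legitimate because $m$ is bounded and the integrand is continuous and bounded in $(\theta,\bm{\xi})$, and the proposition is intentionally soft --- it discards all cancellation in both $\lambda$ and $\theta$ to reduce the maximal inequality to a single scalar integral. The genuine work will come in the subsequent proof of the decay estimate \eqref{dyadicerrordecay}, where Weyl-type minor arc bounds on $\sup_{\bm{\xi}}|F(\theta,\bm{\xi})|$ must be deployed to turn this soft estimate into polynomial decay in $N$.
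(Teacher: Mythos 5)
Your argument is correct and is essentially the paper's own proof: dominate the supremum over $\lambda$ by discarding the phase $e(-\lambda\theta)$ and using $N(\lambda)\gtrsim N(\Lambda)$, then apply Minkowski's integral inequality followed by Plancherel (the standard $L^\infty$ multiplier bound) for each fixed $\theta$. The only cosmetic difference is that you package the Bessel/sup-extraction step as a single multiplier estimate for $G_\theta$, which is the same computation.
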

\begin{proof}
First note that
\[
|E_{m,\lambda} (f)(\bm{u})| \leq \frac{1}{N(\Lambda)}\int_m|\int_{\T^d}F(\theta,\bm{\xi})F(\theta)\hat{f}(\bm{\xi})e(-\bm{u}\cdot\bm{\xi})d\xi |d\theta
\]
and call $h(\theta, \bm{u}) := \int_{\T^d}F(\theta,\bm{\xi})F(\theta))\hat{f}(\bm{\xi})e(-\bm{u}\cdot\bm{\xi})d\bm{\xi}$.
Now we have
\[
\|\sup_{\lambda\in [\Lambda, 2\Lambda)}|E_{m,\lambda} (f) |\|_{l^2(\Z^d)}
\leq \frac{1}{N(\Lambda)}\|\int_m|h(\theta,\bm{u})d\theta\|_{l^2(\Z^d)}
\leq \frac{1}{N(\Lambda)}\int_m(\sum_{\bm{u}\in\Z^d}|h(\theta, \bm{u})|^2)^{1/2}d\theta
\]
using Minkowski's integral inequality.  After an application of Bessel's inequality the above is bounded by
\[
\frac{1}{N(\Lambda)}\int_m(\int_{\T^d}F(\theta,\bm{\xi})F(\theta))\hat{f}(\bm{\xi})|^2d\bm{\xi})^{1/2}d\theta
\]
\[
\leq \frac{1}{N(\Lambda)}\int_m\sup_{\bm{\xi}}|F(\theta, \bm{\xi})||F(\theta)|d\theta(\int_{\T^d}|\hat{f}(\bm{\xi})|^2d\bm{\xi})^{1/2}
\]
\[
\leq \frac{1}{N(\Lambda)}\|\hat{f}\|_{L^2(\T^d)}\int_m\sup_{\bm{\xi}}|F(\theta, \bm{\xi})||F(\theta)|d\theta
\]
and after applying Plancherel we get
\[
\|f\|_{l^2(\Z^d)}\frac{1}{N(\Lambda)}\int_m\sup_{\bm{\xi}}|F(\theta, \bm{\xi})||F(\theta)|d\theta.
\]
\end{proof}

Next using the classic Weyl's inequality (see \cite{Vaughan}), we get $\sup_{\bm{\xi}} |S_N(\theta, \bm{\xi})| \lesssim N^{1/2+\varepsilon}$, so we therefore have (for any $\varepsilon >0$),
\[
\int_m\sup_{\bm{\xi}\in\T^d}|F(\theta, \bm{\xi})||F(\theta)|d\theta \lesssim 
N^{d+\varepsilon} = N^{2d-2-(d-2)+\varepsilon}
\]
which is \eqref{dyadicerrordecay} for $\delta = d-2-\varepsilon$.

We also have that 
\begin{equation}
\label{dyadicl1error}
    \|\sup_{\lambda\in [\Lambda,2\Lambda)}|E_{m,\lambda} |\|_{l^1(\Z^d)\to l^1(\Z^d)} \lesssim N^2
\end{equation}
Since 
\[
\|E_{m,\lambda}(f)(\bm{u})\|_{l^1(\Z^d)} \leq \frac{1}{N(\lambda)}\int_m|\int_{\T^d}F(\theta,\bm{\xi})F(\theta))\hat{f}(\bm{\xi})e(-\bm{u}\cdot\bm{\xi})d\bm{\xi}|d\theta 
\]
\[
\leq \frac{N^{2d}}{N(\Lambda)}\|\int_m|f(\bm{u})\|_{l^1(Z^d)}\leq N^2|m|\|f\|_1.
\]

Hence we can interpolate the gain from \eqref{dyadicerrordecay} with \eqref{dyadicl1error} to get:
\[\|\sup_{\lambda\in [\Lambda,2\Lambda)}|E_{m,\lambda} |\|_{l^p(\Z^d)\to l^{p}(\Z^d)} \lesssim N^{\alpha_p}
\]
where $\alpha_p = 2(2/p-1)-\delta (2-2/p)$.
  If $p>\frac{2+\delta}{1+\delta}$ then we have that $\alpha_p<0$.  We can take any $0 < \delta < d-2$, so taking $\delta$ as close to $d-2$ as we wish, we get this bound for $p>\frac{d}{d-1}$.
 Then we sum up over dyadic ranges to get

 \[
\left\| \sup_{\lambda} \big|E_{m,\lambda} | \big| \right\|_{l^p(\mathbb Z^d) \to l^p(\mathbb Z^d)} \leq \sum_{N = 2^j}\|\sup_{\lambda\in [\Lambda,2\Lambda)}|E_{m,\lambda} |\|_{l^p(\Z^d)\to l^p(\Z^d)} 
\]
\[
\lesssim \sum_{N= 2^j}N^{\alpha_p} \lesssim \sum_j2^{\alpha_p j} \leq C.
\] 
which yields $l^p(\mathbb Z^d) \to l^p(\mathbb Z^d)$ bounds for $E_{m,\lambda}$ for all $p >\frac{d}{d-1}$.

\section{Major arc error terms}
\label{majorarcerror}

Here we will show that the error incurred from using the operator $M_\lambda$ instead of $A_\lambda$ is small, namely:
\begin{equation}
\label{approximationerrorlp}    
\left\| \sup_{\lambda \in [\Lambda/2, \Lambda)} \big| A_\lambda-M_\lambda \big| \right\|_{l^p(\mathbb Z^d) \to l^p(\mathbb Z^d)} \lesssim \Lambda^{-\gamma_p}.
\end{equation}
for some $\gamma_p >0$.  This is standard, but for completeness we quickly sketch the details.
The estimate \eqref{approximationerrorlp}
\[
\left\| \sup_{\lambda} \big| A_\lambda-M_\lambda \big| \right\|_{l^p(\mathbb Z^d) \to l^p(\mathbb Z^d)} \leq \sum_{\lambda\approxeq 2^j}\left\| \sup_{\lambda \in [\Lambda/2, \Lambda)} \big| A_\lambda-M_\lambda \big| \right\|_{l^p(\mathbb Z^d) \to l^p(\mathbb Z^d)}
\]
\[
\lesssim \sum_j2^{-\gamma_p j} \leq C.
\]

To show \eqref{approximationerrorlp}, we will show
\begin{equation}
\label{approximationerrorl2}
\left\| \sup_{\lambda \in [\Lambda/2, \Lambda)} \big| A_\lambda-M_\lambda \big| \right\|_{l^2(\mathbb Z^d) \to l^2(\mathbb Z^d)}\lesssim \Lambda^{-\beta_p}
\end{equation}
which is Proposition 4 of \cite{Magyar_ergodic}; see also \cite{Hughes_Vinogradov}, and interpolate this with the estimate
\begin{equation}
\label{approximationerrortrivial}
    \left\| \sup_{\lambda \in [\Lambda/2, \Lambda)} \big| A_\lambda-M_\lambda \big| \right\|_{l^p(\mathbb Z^d) \to l^p(\mathbb Z^d)} \lesssim 1.
\end{equation}
Given any $\varepsilon >0$, this will prove \eqref{approximationerrorlp} for any $\frac{d}{d-2}+\varepsilon < p \leq 2$ as long as $\beta_p >0$.
To prove \eqref{approximationerrortrivial}, we simply combine the estimates \[
\left\| \sup_{\lambda \in [\Lambda/2, \Lambda)} \big| A_\lambda \big| \right\|_{l^p(\mathbb Z^d) \to l^p(\mathbb Z^d)}
\lesssim 1
\]
and 
\[
\left\| \sup_{\lambda \in [\Lambda/2, \Lambda)}\big| M_\lambda \big| \right\|_{l^p(\mathbb Z^d)\to l^p(\mathbb Z^d)}
\lesssim 1.
\]
The latter estimate is true for $\frac{d}{d-2} <p \leq 2$ due to Section \ref{mainterm}, and the former is true by \cite{Magyar_dyadic}.

%We will do this by showing
%\[
%\left\| \sup_{\lambda \in [\Lambda/2, \Lambda)} \big| A_\lambda-B_\lambda \big| \right\|_{l^p(\mathbb Z^d) \to l^p(\mathbb Z^d)} \lesssim \Lambda^{-\delta_p}.
%\] and 
%\[
%\left\| \sup_{\lambda \in [\Lambda/2, \Lambda)} \big| B_\lambda-C_\lambda \big| \right\|_{l^p(\mathbb Z^d)\times l^p(\Z^d) \to l^p(\mathbb Z^d)} \lesssim \Lambda^{-\delta_p}.
%\]

\section{Main Term}
\label{mainterm}
Here we estimate the $l^p\to l^p$ norm of the main term.  Firstly, using the triangle inequality,
\[
    \|\sup_\lambda |M_\lambda|\|_p \leq \sum_{q=1}^\infty \sum_{a\in U_q}\|\sup_\lambda |M_\lambda^{a,q}|\|_p\]
where we recall the multiplier $\hat{M}_\lambda^{a,q}$ defined in \eqref{Coperator}:
\[
\hat{M}_\lambda^{a,q} = 
\sum_{\bm{l}\in \Z^d}G(\bm{l},a,q)\sum_{\bm{m}\in\Z^d}G(\bm{m},0,q)\Psi_1(q\bm{\xi}-l)\Psi_2(-\bm{m})\RFT{d\sigma}_{\lambda^{1/2}}((\bm{\xi}\otimes \bm{0})-(\frac{\bm{l}}{q}\otimes\frac{\bm{m}}{q}).
\]
The multiplier $\hat{M}_\lambda^{a,q}$ naturally splits up into the product of two multipliers as in \cite{MSW}
\[
\hat{S}(\bm{\xi})=\sum_{\bm{l}\in \Z^d}\sum_{\bm{m}\in\Z^d}G(\bm{l},a,q)G(\bm{m},0,q)\Psi_1(q\bm{\xi}-l)\Psi_2(-\bm{m})
\]
    and \[\hat{S}'_\lambda(\bm{\xi}) = \sum_{\bm{l}\in \Z^d}\sum_{\bm{m}\in \Z^d}\Psi_1'(q\bm{\xi}-l)\Psi_2'(-\bm{m})\RFT{d\sigma}_{\lambda^{1/2}}((\bm{\xi}\otimes \bm{0})-(\frac{\bm{l}}{q}\otimes\frac{\bm{m}}{q}))
\]
where $\Psi_1'$ is an appropriate cutoff functions such that $\Psi_1'\Psi_1 = \Psi_1$, and similarly for $\Psi_2'$.  At this stage it is important to have the term $\Psi_2$ present -- without this localizing term (which reduces the sum in $\bm{m}$ to a single term), we could not split the multiplier in this way, since this splitting relies on the fact that for each $\bm{\xi}$, there is only one $\bm{l}$ and one $\bm{m}$ that contribute to the sum.  

%Note that due to periodicity, in the first multiplier, we have changed the sum in $m$ to be over the group $\Z_q^d$.  This is crucial to our analysis.  

Since $M_\lambda^{a,q} = S'_\lambda\circ S = S\circ S'_\lambda$, It suffices to bound both $S$ and $\sup_\lambda |S'_\lambda|$ in $l^p(\Z^d)$. 

To bound $S'$, we use the bounds for the continuous version of the bilinear spherical maximal function from \cite{JL}.  Note that due to the extra Fourier decay, we are able to get $l^p(\Z^d)$ bounds for all $p>1$.
\begin{prop}
$\|\sup_\lambda |S'_\lambda|\|_{l^p(\Z^d)\to l^p(\Z^d)} \leq C$ for all $d \geq 2, p>1$
\end{prop}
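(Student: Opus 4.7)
The plan is to exploit the fact that $\Psi_2'(-\bm{m})$ is supported only at $\bm{m}=\bm{0}$, which collapses the double sum in $\hat{S}'_\lambda$ to a sampled/periodized continuous multiplier, and then to apply the Magyar--Stein--Wainger transference principle to reduce matters to an $L^p(\R^d)$ maximal bound supplied by the continuous theorem of Jeong--Lee \cite{JL}.

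First, since $\Psi_2'$ is supported near the origin and $\bm{m}\in\Z^d$, the sum over $\bm{m}$ collapses to the single term $\bm{m}=\bm{0}$, so
\[
\hat{S}'_\lambda(\bm{\xi}) = \sum_{\bm{l}\in\Z^d} m_\lambda\!\bigl(\bm{\xi}-\tfrac{\bm{l}}{q}\bigr),
\qquad
m_\lambda(\bm{\eta}) := \Psi_1'(q\bm{\eta})\,\RFT{d\sigma}_{\lambda^{1/2}}(\bm{\eta},\bm{0}).
\]
By choosing $\Psi_1'$ with sufficiently small support, the shifts $m_\lambda(\cdot - \bm{l}/q)$ have disjoint supports, putting $\hat{S}'_\lambda$ in the classical MSW form of a compactly supported continuous symbol periodized over the rationals $\bm{l}/q$.

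Next, I would invoke the Magyar--Stein--Wainger transference principle (used in exactly this way in \cite{MSW}, \cite{Magyar_ergodic} and \cite{Hughes_Vinogradov}) to get
\[
\bigl\|\sup_\lambda |S'_\lambda f|\bigr\|_{l^p(\Z^d)} \lesssim \bigl\|\sup_\lambda |m_\lambda(D)|\bigr\|_{L^p(\R^d)\to L^p(\R^d)}\,\|f\|_{l^p(\Z^d)}
\]
with constant uniform in $q$, where $m_\lambda(D)$ is the continuous Fourier multiplier on $\R^d$ with symbol $m_\lambda$. Up to a harmless convolution with the Schwartz function $\widecheck{\Psi_1'}(\cdot/q)$, the operator $m_\lambda(D)$ is convolution against the marginal of $d\sigma_{\lambda^{1/2}}$ onto the first $d$ coordinates; equivalently, $m_\lambda(D)f(\bm{x}) = T_{\lambda^{1/2}}(f,\mathbf{1})(\bm{x})$, the continuous bilinear spherical average of radius $\lambda^{1/2}$ with the second entry identically $1$.

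Finally, Jeong--Lee's $L^p(\R^d)\times L^\infty(\R^d)\to L^p(\R^d)$ theorem \cite{JL}, valid for every $p>1$ when $d\geq 2$, applied with $g\equiv\mathbf{1}$, supplies the required continuous $L^p$ bound and completes the chain. The main obstacle is the verification of the MSW transference uniformly in $q$, which rests on the disjoint-support property of the shifts $m_\lambda(\cdot-\bm{l}/q)$; this is precisely the reason the localizer $\Psi_2'$ was inserted at \eqref{Boperator}. The extra Fourier decay \eqref{Fourier decay} of the bilinear symbol, stronger than that of any individual linear spherical measure, is what makes the full range $p>1$ accessible; a more elementary alternative bypassing \cite{JL} is to note that the marginal kernel is pointwise $\lesssim \lambda^{-d/2}\chi_{B(0,\lambda^{1/2})}$, so $\sup_\lambda |m_\lambda(D)f|$ is controlled by the Hardy--Littlewood maximal function.
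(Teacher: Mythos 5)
Your proof is correct, and it takes a genuinely different (and in one respect lighter) route than the paper. The paper keeps both frequency variables alive: it periodizes the full bilinear symbol into $\hat{U}_\lambda(\bm{\xi},\bm{\eta})$ on $\T^{2d}$, applies Magyar--Stein--Wainger transference in $\Z^{2d}$, invokes the Jeong--Lee bilinear maximal theorem on $\R^{2d}$, and only then specializes via $\hat{S}'_\lambda(\bm{\xi})=\hat{U}_\lambda(\bm{\xi},\bm{0})$, i.e.\ $S'_\lambda f = U_\lambda(f,\delta_0)$ with $\|\delta_0\|_{l^p(\Z^d)}=1$. You instead set $\bm{\eta}=\bm{0}$ at the level of the symbol first, which collapses everything to a $d$-dimensional periodized multiplier whose single-period piece is (a mollification of) the Fourier transform of the marginal of $d\sigma_{\lambda^{1/2}}$ onto the first $d$ coordinates; you then transfer in $\Z^d$ and bound the resulting continuous maximal operator. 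Your closing observation is the real payoff: the marginal of the normalized surface measure on the radius-$R$ sphere in $\R^{2d}$ has density $c_d R^{-d}(1-|\bm{u}|^2/R^2)^{(d-2)/2}\chi_{B(0,R)}$, which for $d\geq 2$ is an $L^1$-normalized bump, so $\sup_\lambda|m_\lambda(D)f|\lesssim Mf$ and the proposition follows from Hardy--Littlewood for all $p>1$ without any appeal to \cite{JL} --- consistent with the fact that only the $L^p\times L^\infty\to L^p$ slot of the bilinear theorem is ever used here. The paper's heavier $\Z^{2d}$ formulation buys a template that would also handle the case where the second frequency variable is genuinely active (and it parallels the treatment of the arithmetic factor $S$), but for this proposition your argument is more self-contained. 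Two small points to make airtight: justify that the shifts $m_\lambda(\cdot-\bm{l}/q)$ are disjointly supported (take $\Psi_1'$ supported in a cube of side less than $1$, so each shifted piece lives in a ball of radius $O(1/q)$ about $\bm{l}/q$), and note that the mollifier $q^{-d}\widecheck{\Psi_1'}(\cdot/q)$ is itself dominated by a superposition of $L^1$-normalized dilates of a fixed bump, so the composite kernel is still controlled by the Hardy--Littlewood maximal function uniformly in $q$ --- this is the Rubio de Francia comparison the paper cites.
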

\begin{proof}
Fist note that we have $\hat{S}'_\lambda(\bm{\xi}) = \hat{U}_\lambda(\bm{\xi},\bm{0})$ where
\begin{equation}
    \hat{U}_\lambda(\bm{\xi},\bm{\eta}) = \sum_{\bm{l}\in \Z^d}\sum_{\bm{m}\in \Z^d}\Psi_1'(q\bm{\xi}-\bm{l})\Psi_2(q\bm{\eta}-\bm{m})\RFT{d\sigma}_{\lambda^{1/2}}((\bm{\xi}\otimes \bm{\eta})-(\frac{\bm{l}}{q}\otimes\frac{\bm{m}}{q})).
\end{equation}
This is now a symbol in $\T^{2d}$.  We can now apply Magyar-Stein-Wainger transference \cite{MSW} to $U$, followed by an application of the boundedness of the bilinear spherical maximal function in \cite{JL} to get
\[
\|\sup_\lambda |U_\lambda|\|_{l^p(\Z^{2d})} \leq \|\sup_\lambda |U_\lambda|\|_{L^p(\R^{2d})} \leq C,
\]
where we have used the decay of our symbol in \eqref{Fourier decay} to compare it to mollified bilinear spherical averages in \cite{JL} via the method of Rubio de Francia \cite{RdF}.

Finally, we have that since $\hat{S}'_\lambda(\bm{\xi}) = \hat{U}_\lambda(\bm{\xi},\bm{0})$, then
\[
\|\sup_\lambda |S'_\lambda(f)|\|_{l^p(\Z^{d})} = \|\sup_\lambda |U_\lambda(f,\delta_0)|\|_{l^p(\Z^{2d})}
\]
and since $\|\delta_0\|_{l^p(\Z^d)} = 1$,
\[
\|\sup_\lambda |U_\lambda(f,\delta_0)|\|_{l^p(\Z^{2d})} \leq \|\sup_\lambda |U_\lambda|\|_{l^p(\Z^{2d})}
\]
which finishes the proof.
%Note that \hat(S')(f) =  = \hat(S)\hat(f)' =  \hat(U)(f,\delta_0) almost everywhere

%Note that $S'$ is bounded by a mollified bilinear spherical average on $\R^{2d}$, therefore the bound for $\sup_\lambda S'_\lambda$ follow from the decay estimate \eqref{Fourier decay} along with the bounds for the bilinear spherical maximal function in $\R^{2d}$ \cite{JL} via the Magyar-Stein-Wainger transference principle \cite{MSW}.  Moreover, the bounds in \cite{JL} are sharp.  (*Please check that I did this correctly)      
\end{proof}

%To bound the operator $S$, we first notice that using orthogonality, we have that 
%\[\sum_{m\in \Z_q^d}G(m,0,q) = \sum_{z\in \Z_q^d, q|z_1, \dots , q|z_d}1 = \sum_{z\in \Z_q^d, q = z_1 = \cdots = z_d}1 = 1.\]
%(This just uses a $d$-dimensional version of the fact that $\sum_{a=1}^qe(\frac{an}{q}) = \sum_{a=1}^qq\chi_{(n,q) = 1}$ along with periodicity.)  Hence the term in $m$ drops out, and we can apply Proposition 2.2 of \cite{MSW} to bound $S$.

To bound the operator $S$ notice that now $\sum_{\bm{m}\in \Z_q^d}G(\bm{m},0,q)\Psi_2(-\bm{m})$ = $G(\bm{0},0,q) = 1$ so that our multiplier now takes the form of those considered in \cite{MSW}.  We have:
\begin{prop}
$\|S\|_{l^p(\Z^d)\to l^p(\Z^d)} \leq q^{-d(1-1/p)+\varepsilon}$
\end{prop}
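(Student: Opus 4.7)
As noted just above the statement, the $\Psi_2(-\bm m)$ factor localizes the $\bm m$-sum to $\bm m = \bm 0$, and $G(\bm 0, 0, q) = q^{-d}\cdot q^d = 1$, so the symbol collapses to the Magyar--Stein--Wainger-type form
\[
\hat S(\bm\xi) \;=\; \sum_{\bm l \in \Z^d} G(\bm l, a, q)\, \Psi_1(q\bm\xi - \bm l).
\]
My plan is to establish matching endpoint estimates at $p=1$ and $p=2$ and then interpolate.

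At $p=2$ I would use Plancherel. Arranging for $\Psi_1$ to be supported in a small enough ball ensures that the bumps $\Psi_1(q\bm\xi - \bm l)$ are pairwise disjointly supported as $\bm l$ ranges over $(\Z/q\Z)^d$, so
\[
\|\hat S\|_{L^\infty(\T^d)} \;\leq\; \|\Psi_1\|_\infty \cdot \sup_{\bm l} |G(\bm l, a, q)|.
\]
The classical Weil bound on $d$-dimensional Gauss sums gives $|G(\bm l, a, q)| \lesssim q^{-d/2 + \varepsilon}$ for $(a, q) = 1$, yielding $\|S\|_{l^2 \to l^2} \lesssim q^{-d/2 + \varepsilon}$.

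At $p = 1$ I would compute the convolution kernel $K$ of $S$ in closed form. Expanding the definition of the Gauss sum, applying Poisson summation in $\bm l \in \Z^d$, and collapsing via the orthogonality relation on $(\Z/q\Z)^d$ (only the residue $\bm y \equiv -\bm x \pmod{q}$ contributes, and $\bm y^2 \equiv \bm x^2 \pmod{q}$ on that residue) produces the clean formula
\[
K(\bm x) \;=\; q^{-d}\, e(a\bm x^2/q)\, \check{\Psi}_1(\bm x/q),
\]
where $\check{\Psi}_1$ is the inverse Fourier transform of $\Psi_1$. Since $\check{\Psi}_1$ is Schwartz, a Riemann-sum comparison gives $\sum_{\bm x \in \Z^d} |\check{\Psi}_1(\bm x/q)| \lesssim q^d \|\check{\Psi}_1\|_{L^1(\R^d)}$, whence $\|K\|_{\ell^1(\Z^d)} \lesssim 1$, and therefore $\|S\|_{l^1 \to l^1} \lesssim 1$.

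Riesz--Thorin interpolation between these two endpoints (with parameter $\theta = 2(1-1/p)$) then produces
\[
\|S\|_{l^p \to l^p} \;\lesssim\; 1^{1-\theta} \cdot \bigl(q^{-d/2 + \varepsilon}\bigr)^{\theta} \;=\; q^{-d(1 - 1/p) + \varepsilon}
\]
on $p \in [1, 2]$, and a parallel argument applied to the adjoint $S^*$ covers the dual range $p \in [2, \infty]$, since $\overline{\hat S}$ has the identical structural form with $G(\bm l, a, q)$ replaced by $\overline{G(\bm l, a, q)} = G(-\bm l, -a, q)$, admitting the same Weil-type estimate.

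The main technical obstacle is the explicit kernel computation: one must track the fundamental-domain identification for the $\bm l$-sum carefully and justify extending integrals over $\T^d$ to $\R^d$, both of which are handled by taking the support of $\Psi_1$ strictly inside a small enough ball so that the bumps $\Psi_1(q\bm\xi - \bm l)$ never overlap across residues. Once the closed form for $K$ is in hand, the Weil bound and Riesz--Thorin deliver the conclusion routinely.
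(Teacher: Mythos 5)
For $1 \le p \le 2$ your argument is correct and is, in substance, the same as the paper's: the paper simply cites Proposition 2.2 of Magyar--Stein--Wainger together with $\sup_{\bm{l}}|G(\bm{l},a,q)| \lesssim q^{-d/2}$, and what you have written out (Plancherel plus the bounded overlap of the bumps $\Psi_1(q\bm{\xi}-\bm{l})$ at $p=2$; the explicit kernel $K(\bm{x}) = q^{-d} e(a\bm{x}^2/q)\,\check{\Psi}_1(\bm{x}/q)$ giving $\|K\|_{\ell^1}\lesssim 1$ at $p=1$; then Riesz--Thorin) is precisely the proof of that cited proposition. Your kernel computation is right, and the Gauss sum bound is the classical complete-sum estimate, for which no $\varepsilon$ is even needed. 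One cosmetic caveat: the paper's $\Psi_1$ must equal $1$ on the unit ball, so you cannot shrink its support arbitrarily; you only get bounded overlap of the bumps rather than disjointness, which is still enough for the $L^\infty$ multiplier bound.

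The genuine gap is the range $p>2$, and it matters because the theorem needs $p > d/(d-2)$, which for $d=3$ means $p>3$. Duality does not give what you claim: $S^*$ does have the same structural form, but your interpolation applied to $S^*$ on $l^{p'}$ with $p'\in[1,2]$ yields
\[
\|S\|_{l^p\to l^p} = \|S^*\|_{l^{p'}\to l^{p'}} \lesssim \bigl(q^{-d/2}\bigr)^{2-2/p'} = q^{-d/p},
\]
not $q^{-d(1-1/p)}$. For $p>2$ the exponent $2-2/p$ exceeds $1$, so the claimed bound is \emph{stronger} than the $l^2$ bound and cannot come from interpolating $l^2$ with any other estimate; indeed it fails outright for large $p$: at $p=\infty$ the claim reads $\|S\|_{l^\infty\to l^\infty}\lesssim q^{-d+\varepsilon}$, whereas $\|S\|_{l^\infty\to l^\infty}=\|K\|_{\ell^1}\asymp \|\check{\Psi}_1\|_{L^1(\R^d)}\asymp 1$. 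With the corrected exponent $q^{-d/p}$, the subsequent sum over $a$ and $q$ converges only for $p<d/2$, which is empty for $d=3$. To be fair, the paper's one-line proof inherits the same blind spot, since MSW's Proposition 2.2 is itself an interpolation between $p=1$ and $p=2$; so you have not missed an idea the paper supplies. But as written your proposal does not establish the stated inequality for $p>2$, and some different input (for instance, handling $p\ge 2$ by interpolating the full maximal operator with its trivial $l^\infty$ bound, as MSW do in dimensions where the critical exponent lies below $2$) would be required there.
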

\begin{proof}
Using Proposition 2.2 of \cite{MSW} with  $\gamma = G(\bm{l},a,q)$ we have that
\[
\|S\|_{l^p(\Z^d)\to l^p(\Z^d)} \leq (\sup_{\bm{l}}|G(\bm{l},a,q)|)^{2-2/p}.
\]
  Recalling $\sup_{\bm{l}}|G(\bm{l},a,q)| \leq q^{-d/2}$, we get the desired bound.
\end{proof}

Summing over $q$ and $a$, we get
\[
\|\sup_\lambda |M_\lambda| \|_{l^p(\Z^d)} \leq \sum_{q=1}^\infty\sum_{a\in U_q}q^{-d(1-1/p)} <\infty
\]
if and only if $p>\frac{d}{d-2}$.  Therefore, the arithmetic term provides the bottleneck for boundedness, with the restriction $p> \frac{d}{d-2}$, which matches the bounds in the linear setting.  Note that unlike the linear setting, we can take $d \geq 3$ instead of $d \geq 5$.

\subsection{Proof of Theorem \ref{Main theorem}}\label{proof of main theorem}
We now complete the proof of Theorem \ref{Main theorem}.  Combining the restriction on $p$ from the error estimates along with the sufficient conditions for the main term, we see that the full operator $T^*_0$ is bounded on $l^p(\Z^d)\to l^p(\Z^d)$ for all $d\geq 3$, $p > \frac{d}{d-2}$.  Therefore $T^*$ is bounded on $l^p(\Z^d)\times l^\infty(\Z^d)\to l^p(\Z^d)$ for all $d \geq 3$, $p> \frac{d}{d-2}$.

By taking the example $f = \delta_0$, $g \equiv 1$, one can see that for $\lambda = |\bm{x}|^2$ that $\|T_\lambda(f,g)\|^p_{l^p(\Z^d)} \geq \sum_{\bm{x}\in \Z^d}\big(\frac{1}{|\bm{x}|^{2(d-1)}}\big)^p$, which converges if and only if $p>\frac{d}{2(d-1)}$, therefore for $p\geq 1$.  Similarly, one can take $\lambda = n|\bm{x}|^2$ for any natural number $n$, to reduce matters to estimating 
\[\sum_{\bm{x}\in \Z^d}\big(\frac{1}{|\bm{x}|^{2(d-1)}}\# \{ \bm{v} : |\bm{v}|^2 = \frac{n-1}{n}\lambda \}\big)^p
\]
The count in this sum is $\approxeq |\bm{x}|^{d-2}$ for $d \geq 5$ by the Hardy-Littlewood asymptotic, so we get 
\[
\sum_{\bm{x}\in \Z^d}\big(|\bm{x}|^{-d}\big)^p.
\]  For $d=4$, we use the fact that for $(\frac{n-1}{n})\lambda = 1 \mod{8}$, we have the same asymptotic, and for $d=3$, use the fact that for infinitely many even $\lambda$, we have that $\# \{ \bm{v} : |\bm{v}|^2 = \frac{n-1}{n}\lambda \}$ is nonzero.  The most restrictive of these estimates yield $p>1$.  It would be interesting to see if $p>1$ is also the sharp range of boundedness.

\section{Multilinear results}
We now mention the $l$-linear results that we obtain, which comes by interpolation with an $l^\infty(\Z^d) \times \dots \times l^\infty(\Z^d) \times l^p(\Z^d)\to l^p(\Z^d)$ bound for $T^*$.  Our proofs carry through in this setting; we only indicate the necessary changes.  Firstly, the count $N(\lambda)$ is approximately $\lambda^{\frac{ld}{2}-1}$ by the Hardy-Littlewood asymptotic as long as $d> 4/l$.  Secondly, in the error term analysis, we get the dyadic $l^2(\Z^d)$ bound of $N^{(ld-2)-(\frac{ld}{2}-2-\epsilon)} = N^{ld-2-\delta}$ for $\delta = \frac{ld}{2}-2-\epsilon$.  Interpolating with the trivial $l^1(\Z^d)$ estimate of $N^2$, we get $l^p$ bounds for all $p > \frac{ld}{ld-2}$.  For the main term, we get the restriction $p> min \{ p_c, p_d\}$ where $p_c$ is the infimum of $p$ such that the operator $S'_\lambda$ is bounded (which stems from continuous bounds for the multilinear spherical maximal function), and $p_d = \frac{d}{d-2}$ is still the infimum of all $p$ such that $S$ is bounded. So we have that the $l$-linear variant is bounded on $l^\infty(\Z^d) \times \dots \times l^\infty(\Z^d) \times l^p(\Z^d)\to l^p(\Z^d)$ for all $p>\frac{d}{d-2}$. Through interpolation this leads to bounds $l^{p_1}(\Z^d)\times\ldots\times l^{p_l}(\Z^d) \to l^{r}(\Z^d)$ for $\frac{1}{p_1} + \ldots + \frac{1}{p_l} \geq \frac{1}{r}$, $r>\frac{d}{d-2}$ and  $p_1,\ldots,p_l\geq 1$.

\bibliographystyle{amsplain}

\begin{thebibliography}{99}

\bibitem{ACHK}
T. Anderson, B. Cook, K. Hughes, and A. Kumchev, \emph{On the Ergodic Waring-Goldbach Problem}.  Submitted.  Preprint on arXiv. 

\bibitem{ACHK2}
T. Anderson, B. Cook, K. Hughes, and A. Kumchev,  \emph{Improved $l^p$ boundedness for Integral k-Spherical Maximal Functions}.  Discrete Analysis, May 29, 2018.

\bibitem{BGHHO}
J. Barrionuevo, L. Grafakos, D. He, P. Honz\'{i}k and L. Oliveira,
\emph{Bilinear spherical maximal function}, Math. Res. Lett., \textbf{25} (2018), no. 5, 1369--1388.

\bibitem{Cook}
B. Cook, \emph{Discrete multilinear spherical averages}.  To appear in Canad. Math. Bull.

\bibitem{B86}
J. Bourgain,
\emph{Averages in the plane over convex curves and maximal operators}, J. Analyse Math. \textbf{47} (1986), 69-85.

\bibitem{HardyWright}
G.~H. Hardy and E.~M. Wright, \emph{An {I}ntroduction to the {T}heory of
  {N}umbers}, {F}ifth ed., Oxford University Press, 1979.

\bibitem{Geba_et_al}
D. Geba, A. Greenleaf, A. Iosevich, E. Palsson, and E. Sawyer. \emph{Restricted convolution inequalities, multilinear operators and applications}.
Math. Res. Lett. 20 (2013), no. 4, 675-694.

\bibitem{GHH}
L. Grafakos, D. He and P. Honz\'{i}k,
\emph{Maximal operators associated with bilinear multipliers of limited decay}, (2018), J. Anal. Math., accepted for publication.

\bibitem{HHYpreprint}
Y. Heo, S. Hong, C. W. Yang,
\emph{Improved bounds for the bilinear spherical maximal operators}, preprint.

\bibitem{Hughes_Vinogradov}
K.~Hughes, \emph{Maximal functions and ergodic averages related to {W}aring's
  problem}, Israel J. Math. \textbf{217} (2017), no.~1, 17--55.

\bibitem{Hughes_restricted}
\bysame, \emph{Restricted weak-type endpoint estimates for $k$-spherical
  maximal functions}, to appear in Math. Z.

\bibitem{JL}
Jeong and Lee, \emph{Maximal estimates for the bilinear spherical averages and the bilinear Bochner-Riesz operators}.  Preprint on arXiv.

\bibitem{Magyar_dyadic}
A.~Magyar, \emph{{$L^p$}-bounds for spherical maximal operators on {$\mathbb
  Z^n$}}, Rev. Mat. Iberoamericana \textbf{13} (1997), no.~2, 307--317.

\bibitem{Magyar_ergodic}
\bysame, \emph{Diophantine equations and ergodic theorems}, Amer. J. Math.
  \textbf{124} (2002), no.~5, 921--953. 

\bibitem{Magyar:proc}
\bysame, \emph{Discrete maximal functions and ergodic theorems related to
  polynomials}, Fourier Analysis and Convexity, Appl. Numer. Harmon. Anal.,
  Birkh\"auser Boston, Boston, MA, 2004, pp.~189--208. 

\bibitem{Magyar_discrepancy}
\bysame, \emph{On the distribution of lattice points on spheres and level
  surfaces of polynomials}, J. Number Theory \textbf{122} (2007), no.~1,
  69--83. 

\bibitem{MSW}
A.~Magyar, E.~M. Stein, and S.~Wainger, \emph{Discrete analogues in harmonic
  analysis: {S}pherical averages}, Ann. of Math. (2) \textbf{155} (2002),
  no.~1, 189--208. 

\bibitem{MirekTrojan}
M.~Mirek and B.~Trojan, \emph{Cotlar's ergodic theorem along the prime
  numbers}, J. Fourier Anal. Appl. \textbf{21} (2015), no.~4, 822--848.

\bibitem{Oberlin}
D. Oberlin, \emph{Multilinear convolutions defined by measures on spheres}. 
Trans. Amer. Math. Soc. 310 (1988), no. 2, 821-835. 
  
\bibitem{RdF}
J. L. Rubio de Francia, \emph{Maximal functions and Fourier transforms}, Duke Math. J. 53 (1986), no. 2, 395-404.  

\bibitem{Stein93}
E.~M. Stein, \emph{Harmonic Analysis: Real-Variable Methods, Orthogonality, and Oscillatory Integrals}, Princeton University Press, 1993.

\bibitem{St76}
E. M. Stein,
\emph{Maximal functions: spherical means}, Proc. Nat. Acad. Sci. U.S.A. \textbf{73}, 1976.

\bibitem{Vaughan_hua}
R.~C. Vaughan, \emph{On Waring's problem for smaller exponents. II}, 
Mathematika \textbf{33} (1986), no. 1, 6–22. 

\bibitem{Vaughan}
\bysame, \emph{The {H}ardy--{L}ittlewood {M}ethod}, {S}econd ed.,
  Cambridge University Press, 1997.

\end{thebibliography}

\end{document}